\newtheorem{thm}{Theorem}
 \newtheorem{lem}[thm]{Lemma}
 \newtheorem{defn}[thm]{Definition}
 \newtheorem{rem}[thm]{Remark}
 \newtheorem{ex}{Example}
\newcommand{\R}{{\mathbb R}}
\newcommand{\N}{{\mathbb N}}
\begin{document}

\title{Critical regularity of nonlinearities in semilinear classical damped wave equations 
}


\author{M. R. Ebert         \and G. Girardi
        and M. Reissig 
}


\institute{M. R. Ebert \at
              Departamento de Computa\c{c}\~{a}o e Matem\'atica,
 Universidade de S\~{a}o Paulo (USP),  FFCLRP \\
  Av. dos Bandeirantes, 3900, CEP 14040-901, Ribeir\~{a}o Preto(SP), Brazil \\
              \email{ebert@ffclrp.usp.br}           
           \and
           G. Girardi \at
            Dipartimento di Matematica, Universit\`{a} degli Studi di Bari Aldo Moro, Via Orabona 4, Bari, Italy\\
            \email{giovanni.girardi@uniba.it}
              \and
              M. Reissig \at
              Faculty for Mathematics and Computer Science
 Technical University Bergakademie Freiberg
  Pr\"uferstr. 9, 09596 Freiberg, Germany\\
 \email{ reissig@math.tu-freiberg.de}
}

\date{Received: date / Accepted: date}

\maketitle

\begin{abstract}
In this paper we consider the Cauchy problem for the semilinear damped wave equation
\[ u_{tt} -  \Delta u + u_t= h(u), \qquad u(0,x)=\phi(x), \qquad u_t(0,x)= \psi(x),\]
where $h(s)=|s|^{1+ \frac2{n}}\mu(|s|)$. Here $n$ is the space dimension and $\mu$ is a modulus of continuity.
Our  goal is to obtain sharp conditions on $\mu$ to obtain
a threshold between global (in time) existence of small data solutions (stability of the zero solution) and blow-up behavior even of small data solutions.
\keywords{classical damped waves \and semilinear models \and critical exponent \and blow-up \and small data solutions}
\subclass{MSC 35L05 \and 35L71 \and 35B44}
\end{abstract}

\section{Introduction}
\label{intro}

In ~\cite{TY}, the authors proved the global existence of small data energy solutions for the semilinear damped wave equation
\begin{equation}\label{eq:dampedwave}
 u_{tt}-\Delta u + u_t = |u|^p, \qquad u(0,x)=\phi(x), \qquad u_t(0,x)= \psi(x),\end{equation}
in the supercritical range~$p>1+2/n$, by assuming  compactly
supported small data from the energy space. The compact support assumption on the data can be removed. By only assuming data in Sobolev spaces, a global (in time) existence result was proved in space dimensions~$n=1,2$ in~\cite{IMN04}, by using energy methods, and in space dimension~$n\leq5$ in~\cite{N04}, by using~$L^r-L^q$ estimates, $1\leq r\leq q\leq \infty$.
Nonexistence of general global (in time) small data solutions is proved in~\cite{TY} for~$1<p< 1+2/n$ and in \cite{Z} for $p= 1+2/n$ . The exponent~$1+2/n$ is well known as Fujita exponent and it is the critical power for the following semilinear parabolic Cauchy problem (see~\cite{F66}):
\begin{equation} \label{CPparabolic}
v_t -\triangle v = v^p\,, \qquad v(0,x)=v_0(x)\geq0.
\end{equation}
The diffusion phenomenon between linear heat and linear classical damped wave
models (see \cite{HM},  \cite{MN03},  \cite{N04} and \cite{N03}) explains the parabolic character of classical damped wave
models with power nonlinearities from the point of decay estimates of solutions.\\
In the mathematical literature (see for instance \cite{ER}) the situation is in general described as follows: We have a semilinear Cauchy problem
\[ L(\partial_t,\partial_x,t,x)u=|u|^p, \,\,\,u(0,x)=\phi(x),\,\,\,u_t(0,x)=\psi(x),\]
where $L$ is a linear partial differential operator. Then the authors would like to find a critical exponent $p_{crit}$ in the scale $\{|u|^p\}_{p>0}$, a threshold between two different qualitative behaviors of solutions. As examples see the models (\ref{eq:dampedwave}) or (\ref{CPparabolic}). \smallskip

{\it The main concern of this paper is to show by the aid of the model (\ref{eq:dampedwave}) that the restriction to the scale $\{|u|^p\}_{p>0}$ is too rough to verify the critical non-linearity
or the critical regularity of the non-linear right-hand side.} \smallskip

For this reason we turn to the  Cauchy problem  for the semilinear damped wave equation
\begin{equation}
\label{eq:CPnonlin}
u_{tt}-\Delta u+u_t=h(u), \,\,\, u(0,x)=\phi(x), \,\,\, u_t(0,x)= \psi(x),
\end{equation}
in $[0,\infty)\times \R^n$, where $h(s)= |s|^{1+\frac{2}{n}}\mu(|s|)$. Here $\mu=\mu(s),\,s \in [0,\infty)$, is a modulus of continuity, which provides an additional regularity of the right-hand side
$h=h(s)$ for $s \in [0,\infty)$.
\begin{defn} \label{Defmodulusofcontinuity}
A function $\mu: [0,\infty)\to [0,\infty)$ is called a modulus of continuity, if $\mu$ is a continuous, concave and increasing function satisfying $\mu(0)=0$.
\end{defn}
Our goal is to discuss the influence  of the function $\mu$ on the global (in time) existence of small data Sobolev solutions or on statements for blow-up of Sobolev solutions to \eqref{eq:CPnonlin}.
In the following result, we assume that the  modulus of continuity $\mu$ given in \eqref{eq:CPnonlin} satisfies the following two conditions:
\begin{equation}
\label{eq:conditionsmu}
 s^k|\mu^{(k)}(s)|\leq C\mu(s)\,\,\,\mbox{for}\,\,\,1\leq k\leq n,  \ s \in (0,s_0],\quad \text{ and } \quad \int_{C_0}^\infty \frac{\mu(s^{-1})}{s} ds <\infty,
\end{equation}
where $C$ and $C_0$ are sufficiently large positive constants and $s_0$ is a sufficiently small positive constant.
\begin{rem} \label{Remmodulusofcontinuity}
In the further considerations we need a suitable modulus of continuity satisfying the conditions (\ref{eq:conditionsmu}) on a small interval $[0,s_0]$ only.
Nevertheless we can assume that the modulus of continuity can be continued to the real line in such a way that the properties from Definition \ref{Defmodulusofcontinuity} are satisfied.
\end{rem}
\begin{thm}\label{GEDW}
Let $n=1,2$ and \[ (\phi,\psi)\in \mathcal{A}:=(H^{1+ \lfloor \frac{n}{2}\rfloor}\cap L^1)\times (H^{\lfloor \frac{n}{2}\rfloor }\cap L^1),\] where we denote by $\lfloor \cdot\rfloor$ the floor function. Assume \eqref{eq:conditionsmu}. Then, the following statement holds for a sufficiently small $\varepsilon_0>0$: if
\[ \|(\phi,\psi)\|_{\mathcal{A}} \leq \varepsilon\,\,\,\mbox{for}\,\,\,\varepsilon \leq \varepsilon_0, \]
then there exists a unique globally (in time) Sobolev solution $u$ to \eqref{eq:CPnonlin} belonging to the function space
\[ C\big([0,\infty), H^{1}(\R^n)\cap L^{\infty}(\R^n)\big), \]
such that the following decay estimates are satisfied:
\begin{align*}
\| u(t,\cdot)\|_{L^{\infty}}&\leq C (1+t)^{-\frac{n}{2}}\|(\phi,\psi)\|_{\mathcal{A}},\\
\|  \nabla_x^k u(t,\cdot)\|_{L^2}&\leq C (1+t)^{-\frac{n+2k}{4}}\|(\phi,\psi)\|_{\mathcal{A}}, \qquad k=0,1.
\end{align*}
\end{thm}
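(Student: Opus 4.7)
I would apply Banach's fixed point theorem to the Duhamel map
\[ \mathcal{N}[u](t,x) := u^{\mathrm{lin}}(t,x) + \int_0^t \big(K_1(t-s,\cdot) *_{(x)} h(u(s,\cdot))\big)(x)\,ds, \]
where $u^{\mathrm{lin}}$ solves the linear homogeneous Cauchy problem and $K_0,K_1$ are the associated fundamental solutions. My work space would be
\[ X(T) := \mathcal{C}\big([0,T], H^1(\R^n)\cap L^\infty(\R^n)\big), \]
endowed with the norm
\[ \|u\|_{X(T)} := \sup_{t \in [0,T]} \Big\{(1+t)^{n/2}\|u(t,\cdot)\|_{L^\infty} + \sum_{k=0}^{1} (1+t)^{(n+2k)/4}\|\nabla_x^k u(t,\cdot)\|_{L^2}\Big\}, \]
so that the conclusion of the theorem is precisely the assertion $u\in X(T)$ with bounded $X$-norm, uniformly in $T$. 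Classical Matsumura--Narazaki type $L^1\cap H^s \to L^q$ decay estimates for the linear damped wave equation (cf.\ \cite{MN03,N04}) immediately produce $\|u^{\mathrm{lin}}\|_{X(T)} \leq C\|(\phi,\psi)\|_{\mathcal{A}}$; the Sobolev index $s = \lfloor n/2\rfloor$ in $\mathcal{A}$ is exactly what is needed to absorb the exponentially decaying high-frequency contribution into the $L^\infty$ estimate for $n\leq 2$.

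The heart of the argument is the nonlinear estimate. Monotonicity of $\mu$ together with the interpolation $\|u\|_{L^p}^p \leq \|u\|_{L^\infty}^{p-2}\|u\|_{L^2}^2$ for $p = 1+2/n \geq 2$ (equality when $n=2$) yields
\[ \|h(u(s,\cdot))\|_{L^1} \leq \mu(\|u(s,\cdot)\|_{L^\infty})\|u(s,\cdot)\|_{L^\infty}^{2/n-1}\|u(s,\cdot)\|_{L^2}^2 \leq CM^{1+2/n}(1+s)^{-1}\mu\!\big(M(1+s)^{-n/2}\big) \]
whenever $\|u\|_{X(T)} \leq M$. A direct pointwise bound gives $\|h(u(s,\cdot))\|_{L^\infty} \leq CM^{1+2/n}(1+s)^{-(n+2)/2}\mu(M(1+s)^{-n/2})$, and the first hypothesis in \eqref{eq:conditionsmu} with $k=1$, combined with the chain rule, yields $|\nabla_x h(u)| \leq C|u|^{2/n}\mu(|u|)|\nabla_x u|$; the analogous bounds for $k\leq \lfloor n/2\rfloor$ control $\|h(u(s,\cdot))\|_{H^{\lfloor n/2\rfloor}}$, needed for the high-frequency tail of the linear estimates.

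To close the iteration I split the Duhamel term as $\int_0^t = \int_0^{t/2} + \int_{t/2}^t$. On $[0,t/2]$ one has $1+t-s\sim 1+t$, and the linear $L^1\to L^\infty$ bound reduces matters to
\[ (1+t)^{-n/2}\int_0^{t/2}(1+s)^{-1}\mu\!\big(M(1+s)^{-n/2}\big)\,ds. \]
The decisive step is the substitution $\tau = M(1+s)^{-n/2}$, which converts the inner integral into $\tfrac{2}{n}\int_{\tau_t}^{M}\mu(\tau)\tau^{-1}\,d\tau$; this quantity is uniformly bounded in $t$ precisely because the second condition in \eqref{eq:conditionsmu}, after the change $s\mapsto 1/s$, is exactly $\int_0^{\delta}\mu(\tau)/\tau\,d\tau < \infty$ for small $\delta$. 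On $[t/2,t]$ one employs monotonicity, $\mu(M(1+s)^{-n/2}) \leq \mu(M(1+t/2)^{-n/2})$, and the elementary observation that integrability of $\mu(\tau)/\tau$ near $0$ forces $\mu(r)\log(1/r)$ to be bounded as $r\to 0^+$, which absorbs the logarithmic factor produced by $\int_{t/2}^t(1+t-s)^{-n/2}ds$ in the borderline case $n=2$. The same scheme, applied to the other two components of the $X$-norm and to the Lipschitz-type estimate $|h(u)-h(v)| \leq C(|u|+|v|)^{2/n}\mu(|u|+|v|)|u-v|$ (a consequence of $|h'(s)| \leq C|s|^{2/n}\mu(|s|)$ and the mean value theorem), furnishes the contraction property on a small ball of $X(T)$, uniformly in $T$, and hence a unique global solution.

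The principal obstacle is that $p = 1+2/n$ is exactly the Fujita threshold, at which the naive Duhamel estimate $\int_0^t(1+t-s)^{-n/2}(1+s)^{-1}ds \sim (1+t)^{-n/2}\log(1+t)$ just fails to return the desired decay. The entire purpose of the modulus $\mu$ is to supply the extra factor $\mu(M(1+s)^{-n/2})$, which under the change of variables above replaces the divergent logarithm by the finite quantity $\int_0^{M}\mu(\tau)/\tau\,d\tau$; the integrability condition in \eqref{eq:conditionsmu} is therefore the sharp requirement that makes the whole fixed-point scheme close. Reconciling this with the high-frequency Sobolev estimates (where the regularity $1+\lfloor n/2\rfloor$ of $\phi$ and $\lfloor n/2\rfloor$ of $\psi$ is used) while maintaining the chain-rule control on $h(u)$ is the delicate point, and it is precisely why the derivative bounds $s^k|\mu^{(k)}(s)|\leq C\mu(s)$ for $1\leq k\leq n$ are assumed.
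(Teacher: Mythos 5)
Your proposal is correct and follows essentially the same route as the paper: the same solution space and weighted norm, the same Duhamel fixed-point operator, the Matsumura-type linear decay estimates, and the same decisive change of variables turning the Dini-type condition in \eqref{eq:conditionsmu} into the convergence of $\int_0^\infty (1+s)^{-1}\mu\big(\varepsilon_0(1+s)^{-n/2}\big)\,ds$. The only (valid) minor variations are that you bound $\|u\|_{L^{1+2/n}}$ by $L^2$--$L^\infty$ interpolation where the paper uses Gagliardo--Nirenberg, and on $[t/2,t]$ you absorb the borderline logarithm via the observation that $\int_0^\delta \mu(\tau)\tau^{-1}\,d\tau<\infty$ forces $\mu(r)\log(1/r)$ to stay bounded (indeed small for small $\delta$), whereas the paper uses $1+s\gtrsim 1+t-s$ and monotonicity of $\mu$ to convert that piece back into the same integral as on $[0,t/2]$.
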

\begin{rem}
The key tool to prove Theorem \ref{GEDW} is to apply  estimates for solutions to the  parameter-dependent Cauchy problem for the linear
classical damped wave equation (Lemma \ref{th:Linear}).
By using more general ~$L^r-L^q$ estimates, $1\leq r\leq q\leq \infty$, derived in \cite{N04} for the linear damped wave equation, one can also obtain a global (in time) existence result for higher dimensions $n$, but this aim is beyond the scope of this paper.
\end{rem}
\begin{ex} The hypotheses of Theorem \ref{GEDW} hold for the following functions $\mu$ (see also Remark \ref{Remmodulusofcontinuity}):
\begin{enumerate}
\item $\mu(s)=s^p, p>0$;
\item  $\mu(s)=(\log (1+s))^p, p>0$;
\item  $\mu(0)=0$  and
$\mu(s)=\Big(\log \frac{1}{s}\Big)^{-p}, p>1$;
\item $\mu(0)=0$  and $\mu(s)=\Big(\log \frac{1}{s}\Big)^{-1}\Big(\log\log\frac{1}{s}\Big)^{-1}\cdots \Big(\log^{k} \frac{1}{s}\Big)^{-p},\,\, \ p>1,\,\,\, k \in \N$.
\end{enumerate}
 \end{ex}
The next result shows that the integral condition on the function $\mu$ in \eqref{eq:conditionsmu} can not be relaxed.
\begin{thm}\label{resultblowupdampedwave} Consider the Cauchy problem
\begin{equation}\label{eq:blowupdampedwave}
\begin{cases}
u_{tt} - \Delta  u + u_t=|u|^{1+\frac2n}\mu(|u|),
  \quad (t,x)\in (0,\infty)\times \R^n,
\\
(u(0,x),u_t(0,x))=(0,g(x)),
  \quad x\in \R^n.
\end{cases}
\end{equation}
Here $\mu=\mu(s),\,s \in [0,\infty)$ is a modulus of continuity which satisfies the condition
\begin{equation}
\label{eq:blowupcondition}
\int_{C_0}^\infty \frac{\mu(s^{-1})}{s} \,ds =\infty,
\end{equation}
where $C_0$ is a sufficiently large positive constant. Moreover, we assume that the function $h: s \in \mathbb{R} \to h(s):=s^{1+\frac2n}\mu(s)$ is convex on $\mathbb{R}$. Suppose that the data
\[ g\in \mathcal{A}:=H^{[n/2]}(\R^n)\cap L^1(\R^n),\] such that
$$ \int_{\R^n} g(x)\,dx>0.$$
Then, in general we have no global (in time) existence of Sobolev solutions even if the data are supposed to be very small in the following sense:
  \[ \|g\|_{\mathcal{A}}\leq \varepsilon\,\,\,\mbox{for}\,\,\, \varepsilon\leq \varepsilon_0.\]
  \end{thm}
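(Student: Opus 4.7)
The strategy is a proof by contradiction combining finite speed of propagation of the damped wave operator, Jensen's inequality (which uses the convexity of $h$), and an iterative analysis of a weighted spatial integral of $u$. Suppose, contrary to the conclusion, that the Cauchy problem \eqref{eq:blowupdampedwave} admits a global-in-time Sobolev solution $u$ for every small datum. By the finite speed of propagation (speed one) of the classical damped wave operator, after cutting off $g$ outside a large ball while preserving the positivity of its integral, we may assume $\mathrm{supp}\, g\subset B_{R_0}$; then $\mathrm{supp}\, u(t,\cdot)\subset B_{R_0+t}$ for every $t\geq 0$. Define
\[
F(t):=\int_{\R^n}u(t,x)\,dx,\qquad G_0:=\int_{\R^n} g(x)\,dx>0.
\]
Integrating \eqref{eq:blowupdampedwave} in $x$ yields the identity $F''(t)+F'(t)=\int_{\R^n}|u|^{1+\frac{2}{n}}\mu(|u|)\,dx\geq 0$ with $F(0)=0$ and $F'(0)=G_0$. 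Multiplying by the integrating factor $e^{t}$ and integrating gives $F(t)\geq G_0(1-e^{-t})$, so $F(t)\geq c_0>0$ for all $t\geq 1$.

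Convexity of $h$ on $\R$ together with Jensen's inequality on the ball $B_{R_0+t}$ (whose volume $V(t)$ is of order $(1+t)^n$) then upgrades the identity to the ODE-type inequality
\[
F''(t)+F'(t)\geq c\,(1+t)^{-2}\,F(t)^{1+\frac{2}{n}}\,\mu\!\Bigl(\tfrac{F(t)}{c(1+t)^n}\Bigr).
\]
Next comes a bootstrap: injecting the current lower bound $F\geq c_0$ in the right-hand side, using the asymptotic $\int_0^t e^s f(s)\,ds\sim e^t f(t)$ valid for slowly varying $f$, yields a first genuine growth rate for $F$; reinjecting this improvement into the Jensen inequality and iterating, the parabolic change of variables $r=c^{-1/n}(1+s)^{n/2}$ (reflecting the diffusion phenomenon) telescopes the accumulated integrals into the single expression
\[
F(t)\;\geq\;C\int_{C_0}^{(1+t)^{n/2}}\frac{\mu(r^{-1})}{r}\,dr,
\]
which tends to $+\infty$ as $t\to\infty$ by the divergence assumption \eqref{eq:blowupcondition}. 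This unbounded growth of $F$ contradicts the uniform $L^1$-bound $\|u(t,\cdot)\|_{L^1}\leq C\|g\|_{L^1}$ available from the linear representation formula (Lemma \ref{th:Linear}) for any solution that remains small in the ambient function space.

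The main technical difficulty lies in the iteration step. At the critical Fujita exponent $1+2/n$, a single application of Jensen followed by a direct integration, after the change of variable $\tau=c/(1+s)^n$, introduces a weight of the form $\tau^{1/n-1}$ instead of $\tau^{-1}$, so the resulting integral converges regardless of whether \eqref{eq:blowupcondition} holds. A carefully iterated bootstrap — or, equivalently, a finely tuned self-similar test function with parabolic scaling — is required to peel off this extra factor $\tau^{1/n}$ and produce exactly the integral $\int r^{-1}\mu(r^{-1})\,dr$ that governs the borderline behavior. Moreover, this iteration must be propagated using only the defining features of a modulus of continuity (positivity, monotonicity, concavity, $\mu(0)=0$), since no additional regularity of $\mu$ is assumed in Theorem \ref{resultblowupdampedwave}: here the concavity inequality $\mu(\lambda s)\geq \lambda\mu(s)$ for $\lambda\in(0,1]$ plays the decisive role in controlling the composition $\mu(F/V)$ at each step of the bootstrap.
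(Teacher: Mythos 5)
Your plan takes a genuinely different route from the paper (which uses the Ikeda--Sobajima test function method: parabolically scaled cut-offs $\psi_R=\eta\big(\frac{|x|^2+t}{R}\big)^{n+2}$, Jensen's inequality for the functional $I_R=\int_{Q_R}h(|u|)\psi_R$, and an ODE inequality for $Y(R)=\int_0^Ry(r)r^{-1}\,dr$ that integrates to $\int_{R_0}^R s^{-1}\mu(c_2s^{-n/2})\,ds\leq Y(R_0^{n/2})^{-2/n}$), but as written it has a genuine gap at its decisive step. Jensen over the light-cone ball $B_{R_0+t}$, whose volume is $\sim(1+t)^n$, gives $F''+F'\gtrsim(1+t)^{-2}F^{1+\frac2n}\mu\big(cF(1+t)^{-n}\big)$; inserting $F\geq c_0$ and inverting $(e^tF')'$ yields $F'\gtrsim(1+t)^{-2}\mu\big(c(1+t)^{-n}\big)$, whose time integral \emph{converges} regardless of \eqref{eq:blowupcondition} -- the weight $(1+t)^{-2}$ is already integrable and the factor $\mu\big(c(1+t)^{-n}\big)\leq\mu(c)$ only reinforces convergence. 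You acknowledge this, but the ``iterated bootstrap'' that is supposed to repair it is only asserted: each pass adds an increment controlled by a convergent tail integral, and you give no argument that the accumulated increments diverge, let alone that they organize themselves into $\int^{(1+t)^{n/2}}r^{-1}\mu(r^{-1})\,dr$. Note also that the scale $(1+t)^{n/2}$ in your claimed conclusion is the \emph{parabolic} scale, which never enters your setup (the light cone forces the scale $(1+t)^{n}$); it is precisely to install this parabolic scaling that the paper chooses test functions supported in $\{|x|^2+t\leq R\}$, normalizes by $R^{\frac{n+2}2}$ inside $h^{-1}$, and divides the resulting inequality by $Y^{\frac{n+2}{n}}$ so that the borderline integral appears against the \emph{bounded} quantity $Y(R_0^{n/2})^{-2/n}$.

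The second gap is the final contradiction. Even if you established $F(t)\to\infty$ as $t\to\infty$, this does not contradict global existence: the solution lives in $C\big([0,\infty),H^1\cap L^\infty\big)$ and no uniform $L^1$ bound is available. Lemma \ref{th:Linear} estimates $L^\infty$ and $L^2$ norms of the \emph{linear} solution in terms of $L^1\cap H^k$ norms of the data; it provides no $L^1$ control of the nonlinear solution, and indeed $\|u(t,\cdot)\|_{L^1}\lesssim(1+t)^{n/2}\|u(t,\cdot)\|_{L^2}$ is permitted to grow. You would need either finite-time blow-up of $F$ or, as in the paper, an a priori upper bound on the functional that is uniform in the truncation parameter. (A minor further point: truncating $g$ to compact support replaces the given data, whereas the test function method requires no support assumption.)
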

To prove Theorem \ref{resultblowupdampedwave} we will follow the approach used in \cite{IS19} in which the authors get a sharp upper bound for the lifespan of solutions to some critical semilinear parabolic, dispersive and hyperbolic equations, by using a test function method.
  \begin{ex} The hypotheses of Theorem \ref{resultblowupdampedwave} hold for the following functions $\mu$ (see also Remark \ref{Remmodulusofcontinuity}):
\begin{enumerate}
\item  $\mu(0)=0$  and
$\mu(s)=\Big(\log \frac{1}{s}\Big)^{-p}, 0 < p \leq 1$;
\item $\mu(0)=0$  and $\mu(s)=\Big(\log \frac{1}{s}\Big)^{-1}\Big(\log\log\frac{1}{s}\Big)^{-1}\cdots \Big(\log^{k} \frac{1}{s}\Big)^{-p}, \, p\in (0,1], \ k \in \N$.
\end{enumerate}
 \end{ex}
\begin{rem} \label{Remconvexity}
Let us discuss the assumption in Theorem \ref{resultblowupdampedwave} that the function \[ h: s \in \mathbb{R} \to h(s):=s^{1+\frac2n}\mu(s)\,\,\,\mbox{ is convex on}\,\,\, \mathbb{R}.\]
In a small right-sided neighborhood of $s=0$, this hypothesis can be replaced by
the condition
\[ s^k\mu^{(k)}(s)=o(\mu(s))\,\,\,\mbox{for}\,\,\, s \to +0,\,\,\,k=1,2. \]
Indeed, it is sufficient  to verify that on a small interval $(0,s_0]$
\[ h^{''}(s)= s^{\frac2{n}-1}\left( \frac2{n}(1+\frac2{n})\mu(s) + 2(1+\frac2{n})s\mu'(s) + s^2\mu^{''}(s)\right)\geq 0.\]
This condition is satisfied in our examples. Outside this interval we can choose a convex continuation of $h$.
\end{rem}

\section{Global existence of small data solutions} \label{Secglobalexistencesemilineardampedwave1d2d}

In the proof of Theorem \ref{GEDW} we are going to use the following estimates for Sobolev solutions to the parameter-dependent Cauchy problem for the linear classical damped wave equation.
\begin{lem}[Lemma 1 in \cite{Mat}]
\label{th:Linear}
Let \[ (\phi,\psi)\in \mathcal{A}:=(H^{1+ \lfloor \frac{n}{2}\rfloor}\cap L^1)\times (H^{\lfloor \frac{n}{2}\rfloor }\cap L^1).\] Then, the Sobolev solutions to the Cauchy problem
\begin{equation}
\label{eq:CPlin}
u_{tt}-\Delta u+u_t=0, \,\,\, u(s,x)=\phi_s(x), \,\,\, u_t(s,x)= \psi_s(x),
\end{equation}
satisfies the following estimates for $t\geq 0$:
\[
\| u(t,\cdot)\|_{L^{\infty}}\leq C (1+t-s)^{-\frac{n}{2}}\left( \|\phi_s\|_{L^1} + \|\phi_s\|_{H^{1+\lfloor \frac{n}{2}\rfloor}} +\|\psi_s\|_{L^1}+ \|\psi_s\|_{H^{\lfloor \frac{n}{2}\rfloor}} \right),\]
and for $ k=0,1, 1+\lfloor \frac{n}{2}\rfloor$
\[\| \nabla_x^k u(t,\cdot)\|_{L^2}\leq C (1+t-s)^{-\frac{n+2k}{4}}\left( \|\phi_s\|_{L^1} + \|\phi_s\|_{H^k} +\|\psi_s\|_{L^1}+ \|\psi_s\|_{H^{k-1}} \right).
\]
\end{lem}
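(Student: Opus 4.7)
The plan is to reduce the Cauchy problem \eqref{eq:CPlin} to a family of ODEs via the partial Fourier transform in $x$, and then analyze the resulting Fourier multipliers zone-by-zone in the phase space. Applying $\mathcal{F}_{x\to \xi}$ to \eqref{eq:CPlin} and translating time by $s$, one obtains
\begin{equation*}
\hat u_{tt}(t,\xi) + \hat u_t(t,\xi) + |\xi|^2 \hat u(t,\xi) = 0,\qquad \hat u(0,\xi)=\hat\phi_s(\xi),\quad \hat u_t(0,\xi)=\hat\psi_s(\xi),
\end{equation*}
with characteristic roots $\lambda_{\pm}(\xi) = -\tfrac12 \pm \tfrac12\sqrt{1-4|\xi|^2}$. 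This yields the explicit representation $\hat u = K_0(t,\xi)\hat\phi_s + K_1(t,\xi)\hat\psi_s$, where the multipliers are classical combinations of $e^{t\lambda_\pm}$. After renaming $t-s$ as $t$, it suffices to bound $u$ and $\nabla_x^k u$ in terms of the data at $s=0$ and replace $t$ by $t-s$ in the final estimate.

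Next I would split the frequency space into a dissipative low-frequency zone $Z_{\mathrm{low}}=\{|\xi|\le \delta\}$ and a hyperbolic high-frequency zone $Z_{\mathrm{high}}=\{|\xi|\ge \delta\}$ for a small fixed $\delta<1/2$, via a smooth cut-off $\chi(\xi)$. In $Z_{\mathrm{low}}$ the roots are real with $\lambda_-(\xi)\sim -|\xi|^2$ and $\lambda_+(\xi)\sim -1+|\xi|^2$; thus $K_0,K_1$ admit the decomposition
\begin{equation*}
K_j(t,\xi)\chi(\xi) = a_j(\xi)\, e^{t\lambda_-(\xi)} + r_j(t,\xi),\qquad |r_j(t,\xi)|\lesssim e^{-ct},
\end{equation*}
with smooth symbols $a_j$. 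The main term behaves like the heat kernel multiplier $e^{-c t|\xi|^2}$, so standard Hausdorff--Young together with the identity $\|\mathcal{F}^{-1}(e^{-ct|\xi|^2}\hat f)\|_{L^2}\lesssim (1+t)^{-n/4}\|f\|_{L^1}$ and the corresponding $L^1\to L^\infty$ bound $(1+t)^{-n/2}$ produce the $L^1$-parts of the claimed decay. Differentiation introduces an extra factor $|\xi|^k$, which costs $(1+t)^{-k/2}$, matching the exponent $\tfrac{n+2k}{4}$.

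In $Z_{\mathrm{high}}$, both roots have $\mathrm{Re}\,\lambda_\pm \equiv -\tfrac12$ (modulo the cut-off range), so $K_0,K_1$ are bounded multipliers decaying like $e^{-t/2}$, but a loss of regularity $|\xi|^{-1}$ appears in $K_1$ from $(\lambda_+-\lambda_-)^{-1}$. Consequently the high-frequency contribution to $\|\nabla_x^k u\|_{L^2}$ is controlled by $e^{-t/2}(\|\phi_s\|_{H^k}+\|\psi_s\|_{H^{k-1}})$, which is subsumed into $(1+t)^{-(n+2k)/4}$ up to constants. For the $L^\infty$ estimate I would use Sobolev embedding $H^{1+\lfloor n/2\rfloor}\hookrightarrow L^\infty$ applied to the high-frequency part (producing the regularity index in the statement), and Bernstein's inequality together with the Hausdorff--Young bound $\|\mathcal{F}^{-1}(e^{-ct|\xi|^2}\hat f)\|_{L^\infty}\lesssim (1+t)^{-n/2}\|f\|_{L^1}$ on the low-frequency part.

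The main technical obstacle is the transition zone $|\xi|\sim 1/2$, where $\sqrt{1-4|\xi|^2}$ is singular: stationary/nonstationary phase type arguments break down because the roots coalesce. I would handle this by choosing $\delta$ strictly less than $1/2$, absorbing the (compactly supported, bounded) transitional symbol into the high-frequency bound via the exponential decay of $e^{t\mathrm{Re}\lambda_\pm}$, and using analytic extension of $\sqrt{1-4|\xi|^2}$ around $|\xi|=1/2$ to verify the multipliers remain smooth enough for Mikhlin/Hausdorff--Young. Finally, reinserting the parameter $s$ replaces $t$ by $t-s$ throughout, yielding the stated bounds.
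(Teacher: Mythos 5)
The paper offers no proof of this lemma: it is quoted verbatim as Lemma 1 of Matsumura \cite{Mat}, so there is no internal argument to compare against. Your sketch is, in substance, the standard proof of Matsumura's estimates and it is correct: partial Fourier transform, explicit multipliers built from $e^{t\lambda_\pm}$ with $\lambda_\pm=-\tfrac12\pm\tfrac12\sqrt{1-4|\xi|^2}$, a low-frequency zone where the dominant multiplier behaves like $e^{-ct|\xi|^2}$ and yields the $L^1$-based decay rates $(1+t)^{-n/2}$ and $(1+t)^{-(n+2k)/4}$ via Plancherel/H\"older, and a high-frequency zone with uniform exponential decay $e^{-t/2}$ where the factor $(\lambda_+-\lambda_-)^{-1}\sim|\xi|^{-1}$ explains the one-derivative gain on $\psi_s$ and Sobolev embedding $H^{1+\lfloor n/2\rfloor}\hookrightarrow L^\infty$ gives the regularity index in the $L^\infty$ bound. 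Two small remarks: with your sign convention it is $\lambda_+$, not $\lambda_-$, that behaves like $-|\xi|^2$ near $\xi=0$ (a harmless label swap); and the transition zone $|\xi|\sim 1/2$ needs no analytic continuation or phase analysis, since $K_1(t,\xi)=e^{-t/2}\,\sinh\bigl(\tfrac{t}{2}\sqrt{1-4|\xi|^2}\bigr)/\bigl(\tfrac12\sqrt{1-4|\xi|^2}\bigr)$ is an entire function of $1-4|\xi|^2$ with a removable singularity at the coalescence point, so on that compact set the multipliers are smooth and bounded by $Cte^{-t/2}$, which is already subsumed in the stated decay.
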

\begin{proof}[ Theorem \ref{GEDW} ] \\
The space of Sobolev solutions is $X(t)=C\big([0,t], H^1(\R^n)\cap L^{\infty}(\R^n)\big)$. Taking into consideration the estimates of Lemma \ref{th:Linear} we define on $X(t)$ the norm
\[\|u\|_{X(t)}=\sup_{\tau\in [0,t]}\Big\{\sum_{k=0}^1(1+\tau)^{\frac{n+2k}{4}} \|\nabla^k u(\tau,\cdot)\|_{L^2}+ (1+\tau)^{\frac{n}{2}}\|u(\tau,\cdot)\|_{L^{\infty}}\Big\}. \]
For arbitrarily given data $(\phi,\psi)\in \mathcal{A}$ we introduce the operator
\[ N: u\in X(t)\to u^{lin}+\int_0^t \Phi(t,s,\cdot)\ast_{(x)} h(u(s,\cdot))(x)\, ds \]
in~$X(t)$, where by $u^{lin}$ we denote the solution to the linear parameter-dependent Cauchy problem \eqref{eq:CPlin} with initial data $(\phi,\psi)$. By \[ \Phi(t,s,\cdot)\ast_{(x)} h(u(s,\cdot))(x) \] we denote the Sobolev solution to the Cauchy problem~\eqref{eq:CPlin} with~$\phi_s\equiv 0$ and~$\psi_s=h(u(s,\cdot))$.
We will prove that
\begin{eqnarray}
\label{eq:propN1}
& \|Nu\|_{X(t)}\leq
     C_0\|(\phi,\psi)\|_{\mathcal{A}}+ \tilde{C}_{\varepsilon_0}\|u\|^{1+\frac{2}{n}}_{X(t)},\\
\label{eq:contractionN}
& \|Nu-Nv\|_{X(t)}
     \leq C_{\varepsilon_0}\|u-v\|_{X(t)}\big(\|u\|^{\frac{2}{n}}_{X(t)}+ \|v\|^{\frac{2}{n}}_{X(t)}\big),
\end{eqnarray}
where $C_{\varepsilon_0}$ and $\tilde{C}_{\varepsilon_0}$ tend to $0$ for $\varepsilon_0$ to $0$.\\
First of all we have after applying Lemma \ref{th:Linear} for all $t>0$ the estimate
\begin{eqnarray} \label{Linearestimate} \|u^{lin}\|_{X(t)} \leq C_0\|(\phi,\psi)\|_{\mathcal{A}},\end{eqnarray}
where the constant $C_0$ is independent of $t$. Consequently, it remains to estimate
\[ G(u)(t,x):=\int_0^t \Phi(t,s,x)\ast_{(x)} h(u(s,x)) \,ds.\]
For $j=0,1$ we have
\[ \|\nabla^j G(u)(t,\cdot)\|_{L^2}\leq \int_0^t (1+t-s)^{-\frac{n}{4}-\frac{j}{2}}\|h(u(s,\cdot))\|_{L^1\cap L^2}ds. \]
It holds
\begin{equation*}
\|h(u(s,\cdot))\|_{L^1\cap L^2}\leq \mu(\|u(s,\cdot)\|_{L^{\infty}})\, \||u(s,\cdot)|^{1+\frac2{n}}\|_{L^1\cap L^2}.
\end{equation*}
Thus, by using that \[ \|u(s,\cdot)\|_{L^{\infty}}\leq (1+s)^{-\frac{n}{2}}\|u\|_{X(s)} \] and the monotonicity of $\mu=\mu(s)$ we get the following estimate:
\begin{equation}
\label{eq:norminfty}
\mu(\|u(s,\cdot)\|_{L^{\infty}})\leq   \mu\big((1+s)^{-\frac{n}{2}}\|u\|_{X(s)}\big).
\end{equation}
Let us assume $\|u\|_{X(t)}\leq \varepsilon_0$ for all $t>0$ and some $\varepsilon_0>0$ sufficiently small. Then, since the norm in $X(t)$ is increasing with respect to $t$, we can estimate the right-hand side of \eqref{eq:norminfty} by
\[ \mu\big(\varepsilon_0(1+s)^{-\frac{n}{2}}\big). \]
Moreover, to estimate $\||u(s,\cdot)|^{1+\frac2{n}}\|_{L^1\cap L^2}$ we may apply the Gagliardo-Nirenberg inequality and obtain
\begin{equation}
\label{eq:L3norm}
\|u(s,\cdot)\|_{L^{1+\frac2{n}}}^{1+\frac2{n}}\leq C\|\nabla u(s,\cdot)\|_{L^2}^{1-\frac{n}{2}}\|u(s,\cdot)\|_{L^2}^{\frac{2}{n}+\frac{n}{2}}\leq C (1+s)^{-1}\|u\|_{X(s)}^{1+\frac2{n}},
\end{equation}
and
\begin{equation}
\label{eq:L6norm}
\|u(s,\cdot)\|_{L^{2+\frac4{n}}}^{1+\frac2{n}}\leq C\|\nabla u(s,\cdot)\|_{L^2}\|u(s,\cdot)\|_{L^2}^{\frac2{n}}\leq C (1+s)^{-1-\frac{n}{4}}\|u\|_{X(s)}^{1+\frac2{n}}.
\end{equation}
Thus, we may conclude
\[ \|\partial_x^j G(u)(t,\cdot)\|_{L^2}\leq \|u\|_{X(t)}^{1+\frac2{n}}\int_0^t (1+t-s)^{-\frac{n}{4}-\frac{j}{2}}(1+s)^{-1}\mu(\varepsilon_0(1+s)^{-\frac{n}{2}})\,ds. \]
To estimate $\| G(u)(t,\cdot)\|_{L^\infty}$, the required regularity to the data  increase with $n$, so we split the analysis for $n=1$ and $n=2$. For $n=1$ we may estimate
\[ \| G(u)(t,\cdot)\|_{L^\infty}\leq \int_0^t (1+t-s)^{-\frac{1}{2}}\|h(u(s,\cdot))\|_{L^1\cap L^2}ds, \]
and proceed as before to derive
\[  \|G(u)(t,\cdot)\|_{L^\infty}\leq  \|u\|_{X(t)}^{3}\int_0^t (1+t-s)^{-\frac{1}{2}}(1+s)^{-1}\mu\big(\varepsilon_0(1+s)^{-\frac{1}{2}}\big)\,ds. \]
For $n=2$, applying Lemma \ref{th:Linear} we may estimate
\[ \| G(u)(t,\cdot)\|_{L^\infty}\leq \int_0^t (1+t-s)^{-1}\|h(u(s,\cdot))\|_{L^1\cap H^1}ds. \]
Now, we  have to deal with a new term $\| \nabla h(u(s,\cdot))\|_{ L^2}$. Using \eqref{eq:conditionsmu}, we may estimate
\[|\nabla_x h(u(s,x))|\leq  |u(s,x)| \mu(|u(s,x)|) |\nabla u(s,x)|\]
and
\begin{eqnarray*}
&& \| \nabla h(u(s,\cdot))\|_{ L^2} \lesssim \|u(s,\cdot)\|_{L^{\infty}} \mu(\|u(s,\cdot)\|_{L^{\infty}}) \|\nabla u(s,\cdot)\|_{ L^2}\\
&& \qquad \lesssim (1+s)^{-2}\|u\|_{X(s)}^{2}\mu\big((1+s)^{-1}\|u\|_{X(s)}\big).
\end{eqnarray*}
Therefore
\[  \|G(u)(t,\cdot)\|_{L^\infty}\leq  \|u\|_{X(t)}^{1+\frac2n} \int_0^t (1+t-s)^{-\frac{n}{2}}(1+s)^{-1}\mu\big(\varepsilon_0(1+s)^{-\frac{n}{2}}\big)\,ds, \,\,\, n=1,2. \]
Now, let $\alpha\leq 1$. On the one hand it holds
\[
\int_0^{\frac{t}{2}} (1+t-s)^{-\alpha}(1+s)^{-1}\mu\big(\varepsilon_0(1+s)^{-\frac{n}{2}}\big)\,ds \sim (1+t)^{-\alpha}\int_0^{\frac{t}{2}} (1+s)^{-1}\mu\big(\varepsilon_0(1+s)^{-\frac{n}{2}}\big)\, ds\]
by using $(1+t-s)\sim (1+t)$ on $[0,t/2]$. On the other hand
\begin{eqnarray*}
&& \int_{\frac{t}{2}}^t(1+t-s)^{-\alpha}(1+s)^{-1}\mu(\varepsilon_0(1+s)^{-\frac{n}{2}})\,ds \nonumber \\&& \qquad \lesssim (1+t)^{-\alpha}\int_{\frac{t}{2}}^t (1+t-s)^{-\alpha}(1+s)^{-1+\alpha}\mu(\varepsilon_0(1+s)^{-\frac{n}{2}})\,ds \nonumber \\
 \label{eq:integral2} && \qquad \lesssim(1+t)^{-\alpha}\int_{\frac{t}{2}}^t (1+t-s)^{-1}\mu(\varepsilon_0(1+t-s)^{-\frac{n}{2}})\,ds,
\end{eqnarray*}
where we used $1+s\sim 1+t$ and $1+s\gtrsim 1+t-s$ on $[t/2,t]$. \\
By using the change of variables $r=\frac{1}{\varepsilon_0}(1+s)^{\frac{n}{2}}$, we get
\[ \int_0^{+\infty} (1+s)^{-1}\mu\big(\varepsilon_0(1+s)^{-\frac{n}{2}}\big)\, ds \sim \int_{C_{\varepsilon_0}}^{+\infty} r^{-1}\mu(r^{-1})\, dr,\]
that is finite, due to assumption \eqref{eq:conditionsmu}. Here, $C_{\varepsilon_0}:= \frac{1}{\varepsilon_0}$ tends to $+\infty$ when $\varepsilon_0$ tends to $0$.\\
Summarizing, we arrive at
\begin{equation} \label{eq:mappingdissipation1000critical}
\|Nu\|_{X(t)}
     \lesssim\,C_0\|(\phi,\psi)\|_{\mathcal{A}} + \tilde{C}_{\varepsilon_0}\|u\|_{X(t)}^{1+\frac2{n}},
     \end{equation}
where $\tilde{C}_{\varepsilon_0}$ tends to $0$ for $\varepsilon_0$ to $0$.\\
To derive a Lipschitz condition we recall
\begin{eqnarray*}
&& G u - G v=  \int_0^t  \Phi(t,s,x) \ast_{(x)}\Big(|u|^{1+\frac2n}\mu(|u|) -  |v|^{1+\frac2n} \mu(|v|)\Big)\, ds \\
&& \qquad =\int_0^t  \Phi(t,s,x) \ast_{(x)} \Big(\int_0^1 (d_{|u|}H(|u|)) (v+\tau(u-v)) d\tau\Big)(s,x) (u-v)(s,x) \,ds,
\end{eqnarray*}
where
\[ H : |u| \in \mathbb{R}^+ \to H(|u|)=|u|^{1+\frac2n}\mu(|u|).\]
By using our assumption to $\mu'=\mu'(s)$ we get
\[ \big|d_{|u|}H(|u|)\big| \lesssim |u|^{\frac2n}\mu(|u|).\]
Here we take into consideration that $|u|\leq s_0$ with $s_0$ from \eqref{eq:conditionsmu} for small data solutions.
Applying Minkowski's integral inequality, Theorem \ref{th:Linear} and the monotonicity of $d_{|u|}H(|u|)$ for small $|u|$ gives
\begin{align*}
&\|\nabla_x^{j}(Gu(t,\cdot)-Gv(t,\cdot))\|_{L^2} \\
& \lesssim \!\int_0^t
(1+t-s)^{-\frac{n}{4}-\frac{j}{2}}\Big\|\Big(\int_0^1 \mu(|v+\tau(u-v)|)|v+\tau(u-v)|^{\frac2n} \,d\tau\Big)|u-v|(s,\cdot)\Big\|_{L^1\cap L^2} \,ds\\
& \lesssim \int_0^t\int_0^1 (1+t-s)^{-\frac{n}{4}-\frac{j}{2}} \big\| \big(|u|^{\frac2n}+|v|^{\frac2n}\big)(u-v)(s,\cdot)\big\|_{L^1\cap L^2}\|\mu(|v+\tau(u-v)|)\|_{L^\infty}\,d\tau \,ds.
\end{align*}
By using H\"older's inequality we get
\begin{align*}
&\big\| \big(|u(s,\cdot)|^{\frac2n}+|v(s,\cdot)|^{\frac2n}\big)(u-v)(s,\cdot)\big\|_{L^1}\\
& \qquad \lesssim \big( \|u(s,\cdot)\|_{L^{1+\frac2n}}^{\frac2n}+\|v(s,\cdot)\|_{L^{1+\frac2n}}^{\frac2n}\big) \|(u-v)(s,\cdot)\|_{L^{1+\frac2n}},
\end{align*}
and
\begin{align*}
& \big\| \big(|u(s,\cdot)|^{\frac2n}+|v(s,\cdot)|^{\frac2n}\big)(u-v)(s,\cdot)\big\|_{L^2}\\
& \qquad \lesssim \big( \|u(s,\cdot)\|_{L^{2+\frac4n}}^{\frac2n}+\|v(s,\cdot)\|_{L^{2+\frac4n}}^{\frac2n}\big) \|(u-v)(s,\cdot)\|_{L^{2+\frac4n}}.
\end{align*}
Thus, we can apply Gagliardo-Nirenberg as in \eqref{eq:L3norm} and \eqref{eq:L6norm} to get
\begin{align*}
& \big\| \big(|u(s,\cdot)|^{\frac2n}+|v(s,\cdot)|^{\frac2n}\big)(u-v)(s,\cdot)\big\|_{L^1}\lesssim (1+s)^{-1}\big( \|u\|_{X(s)}^{\frac2n}+\|v\|_{X(s)}^{\frac2n} \big) \|u-v\|_{X(s)},\\
& \big\| \big(|u(s,\cdot)|^{\frac2n}+|v(s,\cdot)|^{\frac2n}\big)(u-v)(s,\cdot)\big\|_{L^2}\lesssim (1+s)^{-1-\frac{n}{4}}\big( \|u\|_{X(s)}^{\frac2n}+\|v\|_{X(s)}^{\frac2n} \big) \|u-v\|_{X(s)}.
\end{align*}
Now we follow the same ideas presented above to conclude
\begin{align*}
& \|\nabla_x^j (Gu(t,\cdot)-Gv(t,\cdot))\|_{L^2} \\ &
\lesssim \|u-v\|_{X(t)}\big( \|u\|_{X(t)}^{\frac2n}+\|v\|_{X(t)}^{\frac2n}\big)\!\int_0^t \int_0^1 (1+t-s)^{-\frac{n}{4}-\frac{j}{2}} (1+s)^{-1}\mu(\|v+\tau(u-v)\|_{L^\infty}) \,d\tau \,ds\\
& \lesssim \|u-v\|_{X(t)}\big( \|u\|_{X(t)}^{\frac2n}+\|v\|_{X(t)}^{\frac2n}\big)\,(1+t)^{-\frac{n}{4}-\frac{j}{2}}\int_0^t \int_0^1 (1+s)^{-1}\mu\big(\varepsilon_0(1+s)^{-\frac{n}{2}}\big)
\,d\tau \,ds \\& \qquad \leq C'_{\varepsilon_0}(1+t)^{-\frac{n}{4}-\frac{j}{2}}\|u-v\|_{X(t)}\big( \|u\|_{X(t)}^{\frac2n}+\|v\|_{X(t)}^{\frac2n}\big),
\end{align*}
where $C'_{\varepsilon_0}$ tends to $0$ for $\varepsilon_0$ to $0$.\\
To estimate $\| Gu(t,\cdot)-Gv(t,\cdot)\|_{L^\infty}$,  we again split the analysis for $n=1$ and $n=2$. For $n=1$ we may proceed as we did to derive the estimates for $\|\nabla_x^j (Gu(t,\cdot)-Gv(t,\cdot))\|_{L^2}$  to conclude
\[ \| Gu(t,\cdot)-Gv(t,\cdot)\|_{L^\infty}   \leq C'_{\varepsilon_0}(1+t)^{-\frac{1}{2}}\|u-v\|_{X(t)}\big( \|u\|_{X(t)}^{2}+\|v\|_{X(t)}^{2}\big),\]
where $C'_{\varepsilon_0}$ tends to $0$ for $\varepsilon_0$ to $0$.\\
For $n=2$, applying Lemma \ref{th:Linear} we may estimate
\begin{align*} & \| Gu(t,\cdot)-Gv(t,\cdot)\|_{L^\infty}\\
& \qquad  \leq \int_0^t
(1+t-s)^{-1}\Big\|\Big(\int_0^1 (d_{|u|}H(|u|)) (v+\tau(u-v)) \,d\tau\Big)(u-v)(s,\cdot) \Big\|_{L^1\cap H^1} \,ds.
\end{align*}
The only new term to be considered is
\[\big\| (d_{|u|}H(|u|)) (v+\tau(u-v))(s,\cdot) (u-v)(s,\cdot) \big\|_{\dot H^1}.\]
Using \eqref{eq:conditionsmu}, we may estimate
\[ \big|\nabla_x d_{|u|}H(|u|)(v+\tau(u-v))\big|\lesssim  (|\nabla u| + |\nabla v|) \mu(|v+\tau(u-v)|)\]
and
\begin{align*}
& \big\| (d_{|u|}H(|u|)) (v+\tau(u-v))(s,\cdot) (u-v)(s,\cdot) \big\|_{\dot H^1}\\
& \qquad \lesssim  \mu(\|v+\tau(u-v)\|_{L^\infty})
 \big(\|\nabla u(s,\cdot)\|_{L^2} + \|\nabla v(s,\cdot)\|_{L^2}\big) \|(u-v)(s,\cdot)\|_{L^\infty} \\
& \qquad \quad +\mu(\|v+\tau(u-v)\|_{L^\infty})(\|u(s,\cdot)\|_{L^\infty}+ \|v(s,\cdot)\|_{L^\infty})\|\nabla(u-v)(s,\cdot)\|_{L^2}\\
&\qquad \lesssim  (1+s)^{-2}\mu\big(\varepsilon_0(1+s)^{-1}\big)( \|u\|_{X(s)}+\|v\|_{X(s)}) \|u-v\|_{X(s)}.
\end{align*}
Hence, we may estimate
\begin{align*} & \|Gu(t,\cdot)-Gv(t,\cdot)\|_{L^\infty} \\
& \qquad \lesssim ( \|u\|_{X(t)}+\|v\|_{X(t)}) \|u-v\|_{X(t)}\int_0^t
(1+t-s)^{-1}(1+s)^{-1}\mu\big(\varepsilon_0(1+s)^{-1}\big) \,ds\\
& \qquad \leq C'_{\varepsilon_0} (1+t)^{-1} ( \|u\|_{X(t)}+\|v\|_{X(t)}) \|u-v\|_{X(t)},
\end{align*}
where $C'_{\varepsilon_0}$ tends to $0$ for $\varepsilon_0$ to $0$.\\
Summarizing all the estimates implies
     \begin{equation}
\label{eq:contractiondissipation100critical}
\|Nu-Nv\|_{X(t)}
     \leq C_{\varepsilon_0}\|u-v\|_{X(t)}\big( \|u\|_{X(t)}^{\frac2n}+\|v\|_{X(t)}^{\frac2n}\big)
\end{equation}
for any~$u,v\in X(t)$, where $C_{\varepsilon_0}$ tends to $0$ for $\varepsilon_0$ to $0$.
Due to \eqref{eq:mappingdissipation1000critical} the operator $N$ maps~$X(t)$ into itself if $\varepsilon_0$ is small enough.
The existence of a unique global (in time) Sobolev solution $u$ follows by contraction \eqref{eq:contractiondissipation100critical} and continuation argument for small data.
\end{proof}

\section{Non-existence result  via test function method}

   Following the proof of Theorem \ref{GEDW}, we obtain a local (in time) Sobolev solution $u \in C\big([0, T), H^1(\R^n)\cap L^{\infty}(\R^n)\big)$ to \eqref{eq:blowupdampedwave}. For this reason we restrict ourselves to prove that this solution can not exist globally in time.
  \begin{proof}[Theorem \ref{resultblowupdampedwave}] \\
 We introduce the following functions:
\[ \eta(s)=\begin{cases} 1 & \text{ if } s\in [0,\frac{1}{2}],\\
\text{ is decreasing} & \text{ if } s\in (\frac{1}{2},1), \\
0 & \text{ if } s\in [1, \infty), \end{cases} \qquad  \eta^*(s)=\begin{cases} 0 & \text{ if } s\in [0,\frac{1}{2}],\\
\eta(s) & \text{ if } s\in [\frac{1}{2}, \infty), \end{cases}\]
where the function $\eta=\eta(s)$ is supposed to belong to $C^\infty[0,\infty)$. For $R\geq R_0>0$, where $R_0$ is a large parameter, we define for $ (t,x)\in [0,\infty) \times \R^n$ the cut-off functions
\[ \psi_R=\psi_R(t,x)=\eta\bigg(\frac{|x|^2+t}{R} \bigg)^{n+2}\,\,\,\mbox{and}\,\,\, \psi^*_R=\psi^*_R(t,x)=\eta^*\bigg(\frac{|x|^2+t}{R} \bigg)^{n+2}. \]
We note that the support of $ \psi_R$ is contained in \[ Q_R= [0, R] \times B_{\sqrt{R}}\,\,\,\mbox{ with}\,\,\,
 B_{\sqrt{R}}= \big\{ x\in  \R^n: \ |x|\leq \sqrt{R}\big\}.\] The support of $ \psi^*_R$ is contained in
 \[ Q_R^*=Q_R \setminus  \Big\{(t,x): |x|^2+t\leq \frac{R}2 \Big\}.\] We suppose that the Sobolev solution $u=(t,x)$ exists globally in time, that is, the lifespan is $T=T(u)=\infty$.  We define the functional
\[ I_R=\int_{Q_R} h(|u(t, x)|)\psi_R(t,x)\,d(t,x) \,\,\,\mbox{with}\,\,\ h(s):=s^{1+\frac2n}\mu(s).\]
Then, by equation \eqref{eq:blowupdampedwave}, after using integration by parts we arrive at
\[  I_R = -\int_{\R^n} g(x) \psi_R(0,x)\, dx + \int_{Q_R} u(t, x)\big( \partial_t^2\psi_R-\Delta \psi_R-\partial_t \psi_R\big) \,d(t,x).\]
It holds
\begin{align*}
\partial_t \psi_R &= \frac{n+2}{R}  \eta\bigg(\frac{|x|^2+t}{R} \bigg)^{n+1}  \eta'\bigg(\frac{|x|^2+t}{R} \bigg);\\
\partial_t^2 \psi_R &=\frac{(n+2)(n+1)}{R^2}  \eta\bigg(\frac{|x|^2+t}{R} \bigg)^n \eta'\bigg(\frac{|x|^2+t}{R} \bigg)^2\\
& \quad + \frac{n+2}{R^2}\eta\bigg(\frac{|x|^2+t}{R} \bigg)^{n+1}  \eta''\bigg(\frac{|x|^2+t}{R} \bigg);\\
\partial_{x_j}^2\psi_R &= \frac{4(n+2)(n+1)x_j^2}{R^2}  \eta\bigg(\frac{|x|^2+t}{R} \bigg)^n \eta'\bigg(\frac{x^2+t}{R} \bigg)^2\\
&\quad + \frac{4(n+2)x_j^2}{R^2}\eta\bigg(\frac{|x|^2+t}{R} \bigg)^{n+1} \eta''\bigg(\frac{|x|^2+t}{R} \bigg)\\
& \quad +\frac{2(n+2)}{R} \eta\bigg(\frac{|x|^2+t}{R} \bigg)^{n+1} \eta'\bigg(\frac{|x|^2+t}{R} \bigg).
\end{align*}
Thus, since $0\leq \eta\leq 1$ and $\eta',\, \eta''$ are bounded on $[0,\infty)$, there exists $C>0$ such that for each $(t,x)\in \mbox{supp}\, \psi_R$ it holds
\[ \big|\partial_t^2 \psi_R-\Delta \psi_R-\partial_t \psi_R\big|\leq \frac{C}{R}(\psi^*_R(t,x))^{\frac{n}{n+2}}.\]
Thus, we get
\begin{align}
\label{eq:EstimateY(R)}
I_R&=\int_{Q_R} h(|u(t, x)|)\psi_R(t,x)\,d(t,x) \leq -\int_{\R^n} g(x)\psi_R(0,x) \,dx \nonumber\\
& \quad + \frac{C}{R}\int_{Q_R} |u(t,x)|(\psi^*_R(t,x))^{\frac{n}{n+2}} \,d(t,x).
\end{align}
By applying Lemma \ref{Lemma:Jensen} with $\alpha\equiv 1$ we get
\[ h\left(\frac{\int_{Q_R^*} |u(t, x)|(\psi^*_R(t,x))^{\frac{n}{n+2}} \,d(t,x) }{\int_{Q_R^*}1 \,d(t,x)}\right)\leq \frac{\int_{Q_R^*} h\big(|u(t,x)|(\psi^*_R(t,x))^{\frac{n}{n+2}}\big) \,d(t,x)}{\int_{Q_R^*}1\, d(t,x)}.\]
Taking account of
\begin{eqnarray*}
&& \int_{Q_R^*} |u(t, x)|(\psi^*_R(t,x))^{\frac{n}{n+2}} \,d(t,x) = \int_{Q_R} |u(t, x)|(\psi^*_R(t,x))^{\frac{n}{n+2}} \,d(t,x),\\
&&  \int_{Q_R^*}1\, d(t,x) = C \int_{Q_R}1\, d(t,x),
\end{eqnarray*}
we arrive at the estimate
\[ h\left(\frac{\int_{Q_R} |u(t, x)|(\psi^*_R(t,x))^{\frac{n}{n+2}} \,d(t,x) }{C \int_{Q_R}1 \,d(t,x)}\right)\leq \frac{\int_{Q_R} h\big(|u(t,x)|(\psi^*_R(t,x))^{\frac{n}{n+2}}\big) \,d(t,x)}{C \int_{Q_R}1\, d(t,x)}.\]
Notice that, since the modulus of continuity $\mu$ is non-decreasing, we can estimate $$ h\big(|u(t,x)|(\psi^*_R(t,x))^{\frac{n}{n+2}}\big)\leq h(|u(t,x)|)\psi^*_R(t,x).$$
Moreover, $$ \int_{Q_R} 1\, d(t,x)=R^{\frac{n+2}{2}}.$$
Thus, thanks again to $\mu$ to be a non-decreasing function, there exists  $h^{-1}$ and  we may conclude
\begin{equation}
\label{eq:JensenY(R)}
\int_{Q_R} |u(t, x)|(\psi^*_R(t,x))^{\frac{n}{n+2}} \,d(t,x) \leq CR^{\frac{n+2}{2}}h^{-1}\left(\frac{\int_{Q_R} h(|u(t, x)|)\psi^*_R(t,x) \,d(t,x)}{CR^{\frac{n+2}{2}}}\right).
\end{equation}
Let us define the functions
\[ y=y(r)= \int_{Q_R} h(|u(t,x)|)\psi^*_r(t,x) \,d(t,x) \,\,\,\mbox{and}\,\,\,Y= Y(R)=\int_0^R y(r)r^{-1}\,dr. \]
Then, it holds
\begin{align*}
& Y(R)= \int_0^R\bigg(\int_{Q_R} h(|u(t, x)|)\psi^*_r(t,x) \,d(t,x)\bigg)r^{-1}\,dr\\ & \qquad = \int_{Q_R} h(|u(t, x)|) \bigg( \int_0^R \eta^*\bigg(\frac{|x|^2+t}{r}\bigg)^{n+2}\,r^{-1}\,dr\bigg)\,d(t,x) \\& \qquad =
\int_{Q_R} h(|u(t, x)|) \bigg( \int_{\frac{|x|^2+t}{R}}^\infty (\eta^*(s))^{n+2}s^{-1}ds\bigg)\,d(t,x).
\end{align*}
Since $\text{supp}\,\eta^*\subset [1/2,1]$ and $\eta^*$ is a non-increasing function on its support, we obtain the estimate
\[ \int_{\frac{|x|^2+t}{R}}^\infty (\eta^*(s))^{n+2}s^{-1}\,ds\leq \eta\bigg(\frac{|x|^2+t}{R}\bigg)^{n+2} \int_{\frac{1}{2}}^1 s^{-1}\,ds \leq \log(2) \eta\bigg(\frac{x^2+t}{R}\bigg)^{n+2}.\]
Consequently, we may conclude
\[ Y(R) \leq \log(2) \int_{Q_R} h(|u(t, x)|) \psi_R(t,x)\,d(t,x)=\log(2)\, I_R.\]
Moreover, we notice
\[ Y'(R)R=y(R)= \int_{Q_R} h(|u(t, x)|)\psi^*_R(t,x) \,d(t,x).\]
Thus, by \eqref{eq:EstimateY(R)} and \eqref{eq:JensenY(R)}, we get
\[ \frac{Y(R)}{\log(2)}\leq C^2 R^{\frac{n}{2}}h^{-1}\bigg(\frac{Y'(R)}{CR^{\frac{n}{2}}}\bigg).\]
It follows
\[ h\bigg(\frac{Y(R)}{C^2\log(2)R^{\frac{n}{2}}}\bigg)\leq \frac{Y'(R)}{CR^{\frac{n}{2}}}.\]
Thus, we have
\[\bigg(\frac{Y(R)}{C^2\log(2)R^{\frac{n}{2}}}\bigg)^{\frac{n+2}{n}}\mu\bigg(\frac{Y(R)}{C^2\log(2)R^{\frac{n}{2}}}\bigg)\leq \frac{Y'(R)}{CR^{\frac{n}{2}}}.\]
For each $R\geq R_0$, since $Y=Y(r)$ is increasing we have $Y(R)\geq Y(R_0)$. Thus, since $\mu$ is non-decreasing, we have
\[\bigg(\frac{Y(R)}{C^2\log(2)R^{\frac{n}{2}}}\bigg)^{\frac{n+2}{n}}\mu\bigg(\frac{Y(R_0)}{C^2\log(2)R^{\frac{n}{2}}}\bigg)\leq \frac{Y'(R)}{CR^{\frac{n}{2}}}.\]
Thus, we have
\[\frac{1}{R(C^2\log(2))^{\frac{n+2}{n}}}\mu\bigg(\frac{Y(R_0)}{C^2\log(2)R^{\frac{n}{2}}}\bigg)\leq \frac{Y'(R)}{CY(R)^{\frac{n+2}{n}}}.\]
By integrating from $R_0$ to $R$, we can conclude that there exist  constants $c_1$, $c_2$ such that
\begin{equation}
\label{eq:blowupestimate}
\int_{R_0}^{R} \frac{1}{s}\,\mu(c_2s^{-\frac{n}{2}})\,ds=c_1\int_{R_0^ \frac{n}{2}}^{R^\frac{n}{2}} \frac{1}{s}\,\mu(c_2s^{-1})\,ds\leq \bigg[-\frac{1}{Y(s)^{\frac{2}{n}}}\bigg]_{R_0^\frac{n}{2}}^{R^\frac{n}{2}}\leq \frac{1}{Y(R_0^\frac{n}{2})^{\frac{2}{n}}}.
\end{equation}
Due to the assumption that $u=u(t, x)$ exists globally in time it is allowed to form the limit $R\rightarrow \infty$ in \eqref{eq:blowupestimate}. But this produces a contradiction, due to the
fact that the right-hand side is bounded and the modulus of continuity $\mu$ satisfies condition \eqref{eq:blowupcondition}.
This completes our proof.
\end{proof}
\section{Appendix} \label{SecAppendix}
In the Appendix we include the following generalized version of Jensen Inequality (\cite{PKJF}).
\begin{lem}
\label{Lemma:Jensen}
Let $\Phi$ be a convex function on $\R$. Let $\alpha=\alpha(x)$ be defined and non-negative almost everywhere on $\Omega$, such that $\alpha$ is positive in a set of positive measure. Then, it holds
\[ \Phi\bigg(\frac{\int_\Omega u(x)\alpha(x)\,dx}{\int_\Omega \alpha(x)\,dx}\bigg)\leq \frac{\int_\Omega \Phi(u(x))\alpha(x)\,dx}{\int_\Omega \alpha(x)\,dx} \]
for all non-negative functions $u$ provided that all the integral terms are meaningful.
\end{lem}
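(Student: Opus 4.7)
The plan is to reduce the weighted statement to the classical Jensen inequality (or equivalently to invoke the supporting-line characterization of convex functions). Set
\[ A := \int_\Omega \alpha(x)\, dx, \qquad \bar u := \frac{1}{A}\int_\Omega u(x)\,\alpha(x)\, dx. \]
The hypotheses on $\alpha$ (non-negative, positive on a set of positive measure) guarantee $A \in (0,\infty)$ whenever the integrals are meaningful, so $\bar u$ is a well-defined real number, and $d\nu := A^{-1}\alpha(x)\,dx$ is a genuine probability measure on $\Omega$.

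The key step is to use the fact that a convex function on $\R$ admits an affine minorant at every interior point of its domain: there exists $m \in \partial \Phi(\bar u)$ (an element of the subdifferential) such that
\[ \Phi(t) \geq \Phi(\bar u) + m(t-\bar u) \qquad \text{for all } t \in \R. \]
I would substitute $t = u(x)$, multiply through by the non-negative weight $\alpha(x)$, and integrate over $\Omega$. The cross-term vanishes by the very definition of $\bar u$:
\[ \int_\Omega \bigl(u(x)-\bar u\bigr)\,\alpha(x)\,dx = A\bar u - \bar u A = 0. \]
What remains is exactly
\[ \int_\Omega \Phi(u(x))\,\alpha(x)\, dx \;\geq\; \Phi(\bar u)\, A, \]
which upon dividing by $A$ is the claimed inequality.

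The proof is essentially routine and I do not foresee a serious obstacle, but two technical points deserve a brief check. First, the existence of a subgradient $m$ is automatic for a convex function defined on all of $\R$, so there is no boundary issue; the affine-minorant argument applies unconditionally. Second, the phrase "provided all integral terms are meaningful" must be read as ensuring that both $\int \Phi(u)\alpha\,dx$ and $\int u\,\alpha\,dx$ exist (possibly in $[-\infty,+\infty]$) and that $\bar u$ is finite. Under these standing conventions, the integration of the affine bound is justified by monotonicity/linearity of the integral for non-negative weights, and the conclusion follows. If one prefers a more streamlined write-up, one can simply state that $d\nu = A^{-1}\alpha\,dx$ is a probability measure and cite the classical Jensen inequality $\Phi\bigl(\int u\,d\nu\bigr) \leq \int \Phi(u)\,d\nu$ directly.
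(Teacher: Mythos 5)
Your proposal is correct and follows essentially the same route as the paper: both arguments take the supporting line (subgradient) of $\Phi$ at the weighted mean $\bar u$, substitute $t=u(x)$, multiply by $\alpha(x)$, integrate, and observe that the linear term vanishes by the choice of $\bar u$. No substantive difference.
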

\begin{proof}
Let $\gamma>0$ be fixed. From the convexity of $\Phi$ it follows that there exists $k\in \R^1$, such that
\[ \Phi(t)-\Phi(\gamma)\geq k(t-\gamma )  \,\,\, \mbox{ for all } t\geq 0.\]
Putting $t=u(x)$ and multiplying the last inequality by $\alpha(x)$, we get after integration over $\Omega$ that
\[ \int_\Omega \Phi(u(x))\alpha(x)\,dx- \Phi(\gamma)\int_{\Omega}\alpha(x)\,dx\geq k\Big( \int_\Omega u(x)\alpha(x)\,dx-\gamma \int_\Omega \alpha(x)\,dx \Big).\]
The statement follows by putting
\[ \gamma=\frac{\int_\Omega u(x)\alpha(x)\,dx}{\int_\Omega\alpha(x)\,dx}.\]
\end{proof}

\begin{acknowledgements}
The discussions on this paper began during the time the third author spent a two weeks research stay in November 2018 at the Department of Mathematics and Computer Science of University of S\~{a}o Paulo, FFCLRP. The stay of the third author was  supported by Funda\c{c}\~{a}o de Amparo \`{a} Pesquisa do Estado de S\~{a}o Paulo (FAPESP), grant 2018/10231-3.
The second author contributed to this paper during a four months stay within Erasmus+ exchange program during the period October 2018 to February 2019.
The first author have been partially supported by FAPESP,
grant number 2017/19497-3.
\end{acknowledgements}


\begin{thebibliography}{}
%
\bibitem{ER} M. R. Ebert, M. Reissig, Methods for Partial Differential Equations,
Qualitative properties of solutions - Phase space analysis - Semi-linear models, Birkh\"auser, Cham (2018).

\bibitem{F66} H. Fujita, On the blowing up of solutions of the Cauchy Problem for $u_t=\triangle u+u^{1+\alpha}$, J. Fac.Sci. Univ. Tokyo \textbf{13}, 109--124 (1966).
    \bibitem{HM} Han Yang, A. Milani, On the diffusion phenomenon of quasilinear hyperbolic waves, Bull. Sci. math. \textbf{124},  415--433 (2000).
\bibitem{IMN04} R. Ikehata, Y. Mayaoka, T. Nakatake, Decay estimates of solutions for dissipative wave equations in $\mathbb{R}^N$ with lower power nonlinearities, J. Math. Soc. Japan \textbf{56}, 365--373 (2004).
\bibitem{IS19} M. Ikeda, M. Sobajima, Sharp upper bound for lifespan of solutions to some critical semilinear parabolic, dispersive and hyperbolic equations via a test function method, Nonlinear Analysis \textbf{182}, 57--74 (2019).
\bibitem{MN03} P. Marcati, K. Nishihara, The $L^p$-$L^q$ estimates of solutions to one-dimensional damped wave equations and their application to the compressible flow through porous media, J. Differential Eq. \textbf{191}, 445--469 (2003).
\bibitem{Mat} A. Matsumura, On the asymptotic behavior of solutions of semi-linear wave equations, Publ. Res. Inst. Math. Sci. \textbf{12},  169--189 (1976).
\bibitem{N04} T. Narazaki, $L^p-L^q$ estimates for damped wave equations and their applications to semilinear problem, J. Math. Soc. Japan \textbf{56},
586--626 (2004).
\bibitem{N03} K. Nishihara, $L^p-L^q$ estimates for solutions to the damped wave equations in 3-dimensional space and their applications, Math. Z. \textbf{244}, 631--649 (2003).
\bibitem{PKJF}  L. Pick, A. Kufner, O. John, S. Fuc\`ik, Function spaces, Vol. 1,
 De Gruyter Series in Nonlinear Analysis and Applications, 14, Walter de Gruyter  Co., Berlin (2013).
    \bibitem{TY} G. Todorova, B. Yordanov, Critical Exponent for a Nonlinear Wave Equation with Damping, J.  Differential Equations \textbf{174}, 464--489 (2001).
\bibitem{Z} Qi S. Zhang, A blow-up result for a nonlinear wave equation with damping: the critical case, C. R. Acad. Sci. Paris S\'{e}r. I Math. \textbf{333}, 109--114 (2001).
\end{thebibliography}


\end{document}